\numberwithin{equation}{section}
\theoremstyle{plain}
\newtheorem{theorem}{Theorem}[section]
\newtheorem{lemma}[theorem]{Lemma}
\newtheorem{definition}[theorem]{Definition}
\newtheorem{proposition}[theorem]{Proposition}
\theoremstyle{definition}
\def\E{\mathbb{E}}
\def\Z{\mathbb{Z}}
\def\R{\mathbb{R}}
\def\T{\mathbb{T}}
\def\C{\mathbb{C}}
\def\N{\mathbb{N}}
\def\F{\mathbb{F}}
\def\transpose{\top}
\newcommand{\ud}{\,\mathrm{d}}
\newcommand{\Zmod}[1]{\Z_{#1}} % \Z/#1\Z or \Z_{#1} -- this is so that one can easily change from Z_p to Z/pZ depending on the journal's style
\providecommand{\abs}[1]{\lvert#1\rvert}
\providecommand{\norm}[1]{\lVert#1\rVert}
\begin{document}
\title{A removal lemma for linear configurations in subsets of the circle}
\author{Pablo Candela}
\address{Centre for Mathematical Sciences\\
	Wilberforce Road\\
	Cambridge CB3 0WB\\
	United Kingdom}
\email{pc308@cam.ac.uk}

\author{Olof Sisask}
\address{School of Mathematical Sciences\\
	Queen Mary, University of London\\
	Mile End Road\\
	London E1 4NS\\
	United Kingdom}
\email{O.Sisask@qmul.ac.uk}

\thanks{Both authors are EPSRC postdoctoral fellows and gratefully acknowledge the support of the EPSRC}

\subjclass[2010]{Primary 11B30; Secondary 11B75}
\begin{abstract}
We obtain a removal lemma for systems of linear equations over the circle group, using a similar result for finite fields due to Kr\'al, Serra and Vena, and we discuss some applications.
\end{abstract}
\maketitle
\section{Introduction}
If a subset of an abelian group contains very few linear configurations of some given type, then one needs to delete only a few elements from the set in order to remove all such configurations. This is the moral of so-called \emph{arithmetic removal lemmas}. For example, if $A$ is a subset of a cyclic group $\Z_N=\Z/N\Z$ containing only $\delta N^2$ of its own sums (i.e. solutions to $a_1+a_2=a_3$), then one can make $A$ completely sum-free by deleting only $\delta' N$ of its elements, where $\delta'$ depends only on $\delta$, and $\delta' \to 0$ as $\delta\to 0$. In \cite{GAR} Green proved a result of this type dealing with the removal of solutions to a single linear equation over an arbitrary finite abelian group. Green raised the question of whether similar results held for systems of equations, noting that the Fourier analytic methods employed in \cite{GAR} did not extend to give this. Shapira \cite{Shap} and (independently) Kr\'al, Serra and Vena \cite{KSVGC} used hypergraph removal results to obtain the following extension, dealing with systems of linear equations over finite fields:
\begin{theorem}\label{KSVGC}
Let $r\leq m$ be positive integers and let $\epsilon>0$. There exists $\delta>0$ such that the following holds. Let $\F$ be the finite field of order $q$, let $L$ be an $r\times m$ matrix with coefficients in $\F$ of rank $r$ over $\F$, and suppose $A_1,\ldots,A_m \subset\F$ satisfy $\E_{x\in \ker L}\, 1_{A_1}(x_1)\cdots 1_{A_m}(x_m)\leq \delta$. Then there are sets $E_1,\ldots,E_m\subset \F$ of cardinality at most $\epsilon q$ such that $(A_1\setminus E_1)\times \cdots \times (A_m\setminus E_m)\cap \ker L=\emptyset$.
\end{theorem}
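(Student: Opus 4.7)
My plan is to reduce the theorem to the coloured hypergraph removal lemma (Gowers; Rödl--Nagle--Schacht--Skokan), following the strategy of Král, Serra and Vena.

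The first step is a parameterization of the solution space. Since $L$ has rank $r$, its kernel has dimension $k := m - r$, and we fix a linear bijection $\F^k \to \ker L$, $y \mapsto (\ell_1(y), \ldots, \ell_m(y))$, with each $\ell_j$ a linear form on $\F^k$. The condition that $x \in \ker L$ with $x_j \in A_j$ then becomes the condition $\ell_j(y) \in A_j$ for all $j \in [m]$, and the hypothesis reads $\#\{y \in \F^k : \ell_j(y) \in A_j \text{ for all } j\} \le \delta q^k$.

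The second step is to construct a coloured $k$-partite, $(k-1)$-uniform hypergraph $H$ with vertex classes $V_1, \ldots, V_k$, each a copy of $\F$, whose coloured simplices correspond to solutions. The rank-$r$ hypothesis allows one, after suitably adjusting the parameterization, to choose for each $j \in [m]$ an index $i(j) \in [k]$ such that $\ell_j$ does not depend on $y_{i(j)}$; one then places a hyperedge of colour $j$ on the $(k-1)$-tuple $(v_i)_{i \ne i(j)}$ precisely when $\ell_j\big((v_i)_{i \ne i(j)}\big) \in A_j$. A rainbow simplex of $H$, namely a $k$-tuple $(v_1, \ldots, v_k) \in V_1 \times \cdots \times V_k$ whose $m$ faces each carry the appropriate colour, then corresponds to a solution in $A_1 \times \cdots \times A_m$, so $H$ has at most $\delta q^k$ rainbow simplices.

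The third step applies the coloured hypergraph removal lemma: for any $\epsilon' > 0$ there exists $\delta > 0$ such that having this few simplices implies a set of at most $\epsilon' q^{k-1}$ hyperedges whose deletion destroys every rainbow simplex. One then translates the deleted colour-$j$ hyperedges back to elements of $A_j$: each such hyperedge is supported on a specific value $\ell_j(v) \in A_j$, and a multiplicity argument (each value $a \in A_j$ lifts to the $q^{k-2}$-sized fibre $\ell_j^{-1}(a) \cap \F^{k-1}$ of colour-$j$ hyperedges, so deleting $a$ from $A_j$ kills a whole block at once) allows one to replace the removed hyperedges by an offending value-set $E_j \subseteq A_j$ of size at most $\epsilon q$. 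By construction, no solution of $Lx = 0$ survives in $(A_1\setminus E_1)\times\cdots\times(A_m\setminus E_m)$.

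The main obstacle is arranging the hypergraph construction coherently: different $j$'s may in principle require different adjustments to the parameterization of $\F^k$ in order to realise $\ell_j$ as a form omitting some coordinate, and one cannot in general realise all $m$ colours on a single $k$-partite vertex structure. The standard resolution uses the rank hypothesis together with careful reindexing of colours (grouping those $j$'s sharing an $i(j)$, or overlaying several hypergraphs whose removals are then combined), and the final passage from hyperedge removal to element removal via the fibre argument above is the second technical point to verify.
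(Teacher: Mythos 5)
The paper you are reading does not actually prove this statement. Theorem \ref{KSVGC} is quoted as an external input, attributed to Kr\'al, Serra and Vena \cite{KSVGC} (and, independently, Shapira \cite{Shap}), and is used as a black box in the proof of the paper's actual main result, Theorem \ref{T-system-removal}. So there is no internal proof to compare yours against; what you have written is a reconstruction of the cited argument, and I will assess it as such.

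The strategy (reduce to a coloured-hypergraph removal lemma) is the right one, but the shape of your hypergraph is off in a way that is not merely cosmetic. You propose a $k$-partite, $(k-1)$-uniform hypergraph with $k = m - r$ vertex classes, each a copy of $\F$, with the solution set parameterized bijectively by $\F^k$. The standard Ruzsa--Szemer\'edi/Shapira/KSV construction instead uses an \emph{$m$-partite, $(m-1)$-uniform} hypergraph: one introduces $m$ auxiliary variables $v_1,\ldots,v_m$ (one per original variable, not one per free parameter), and a surjective but \emph{non-injective} parameterization $\Phi\colon\F^m\to\ker L$ with fibres of size $q^r$, chosen so that the $j$th coordinate form $\Phi_j$ omits $v_j$. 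Solutions then lift to families of $q^r$ rainbow simplices, and with at most $\delta q^{m-r}$ solutions one gets at most $\delta q^m$ rainbow simplices out of $q^m$ potential ones, which is exactly what removal needs. To see that your choice of $k$ parameters cannot work in general: for Roth's theorem one has $m = 3$, $r = 1$, $k = 2$, so your scheme would call for a $2$-partite $1$-uniform hypergraph, whereas the actual argument is of course a tripartite \emph{graph} ($m=3$, uniformity $m-1=2$). Worse, when $k = 1$ (e.g.\ $L = (1,-1)$) your construction is vacuous. The obstacle you flag at the end --- that with only $k$ variables the forms $\ell_j$ typically depend on \emph{all} coordinates and omit none --- is precisely correct and is not a technicality to be massaged away by ``reindexing colours'' or ``overlaying hypergraphs''; it is the reason the construction must use $m$ auxiliary variables and accept a $q^r$-to-$1$ correspondence. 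Once the number of parts and the uniformity are corrected, both the counting and the fibre argument (removing roughly $\epsilon' q^{m-1}$ hyperedges of colour $j$, each value $a\in\F$ accounting for $\sim q^{m-2}$ of them, hence a Markov-type argument gives $|E_j|\lesssim \epsilon q$) go through as you describe, modulo the usual care in choosing $\Phi$ so that each $\Phi_j$ omits $v_j$, which may require column operations or a reduction to a normal form for $L$.
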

Our aim here is to obtain a continuous analogue of Theorem \ref{KSVGC}, replacing finite fields with the circle group $\T = \R/\Z$. Previous extensions of discrete additive-combinatorial results to the latter setting include the analogues of the Cauchy-Davenport inequality obtained by Raikov \cite{Raikov} and Macbeath \cite{Macbeath}---see the excellent notes \cite{ruzsa:sumsets_structure} of Ruzsa for a more detailed account of this topic---and Lev's work \cite{lev:continuous-sum-free} on sum-free sets in $\T$.

To state our main result let us set up some notation. For any compact abelian group $G$ we denote the normalized Haar measure on $G$ by $\mu_G$. We denote the closed subgroup $\{x\in G^m:Lx=0\}$ of the direct product $G^m$ by $\ker_G L$, and to abbreviate the notation we denote by $\mu_L$ the normalized Haar measure on $\ker_G L$. For measurable functions $f_1,f_2,\ldots,f_m:G\to\C$ we define
\begin{equation}\label{SolMeas}
S_L(f_1,\ldots,f_m)=\int_{\ker_G L} f_1(x_1)\cdots f_m (x_m)\ud\mu_L(x).
\end{equation}
(Throughout the paper ``measurable" refers to Borel measurability.)
If each $f_i$ is the indicator $1_{A_i}$ of a measurable set $A_i\subset \T$, then \eqref{SolMeas} becomes simply $S_L(A_1,\ldots,A_m)=\mu_L(A_1\times \cdots \times A_m\cap \ker_G L)$. We refer to the latter quantity as the \emph{solution measure} of the sets $A_i$. When $A_i=A$ for all $i\in [m]=\{1,2,\ldots,m\}$, we write $S_L(A)$ for the solution measure.
If the group $G$ has to be specified to avoid confusion, we shall write $\mu_{L,G},S_{L,G}$ instead of $\mu_L,S_L$.
The main result, then, is the following.
\begin{theorem}\label{T-system-removal}
Let $L$ be an $r\times m$ matrix of integers, of full rank $r$. For any $\epsilon>0$, there exists $\delta=\delta(L,\epsilon)>0$ such that the following holds. If $A_1,\ldots,A_m$ are measurable subsets of $\T$ such that $S_L(A_1,\ldots,A_m)\leq \delta$, then there are measurable sets $E_1,\ldots,E_m\subset \T$ with $\mu_\T(E_i)\leq \epsilon$ for all $i\in [m]$, such that $(A_1\setminus E_1)\times \cdots \times (A_m\setminus E_m)\cap \ker_\T L=\emptyset$.
\end{theorem}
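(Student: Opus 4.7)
The plan is to deduce Theorem \ref{T-system-removal} from Theorem \ref{KSVGC} by a discretization argument. Fix a large prime $N$ (to be chosen below in terms of $L$, $\epsilon$ and the $A_i$), and partition $\T$ into arcs $I_j = [j/N, (j+1)/N)$ for $j \in \Z_N$. For each measurable $A_i \subseteq \T$, let $B_i = \{j \in \Z_N : \mu_\T(A_i \cap I_j) \geq (1-\rho)/N\}$ for a small auxiliary parameter $\rho = \rho(\epsilon, L)$, and put $\tilde A_i = \bigcup_{j \in B_i} I_j$. Lebesgue's density theorem gives that for $N$ sufficiently large, $\mu_\T(A_i \triangle \tilde A_i) \leq \epsilon/3$. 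After a preliminary reduction to the case when every coordinate projection $\ker_\T L \to \T$ is surjective (the non-surjective coordinates have finite image and can be handled directly by removing the finitely many points of $A_i$ lying in the image, which has $\mu_\T$-measure zero), this approximation gives $|S_{L,\T}(A_1,\ldots,A_m) - S_{L,\T}(\tilde A_1,\ldots,\tilde A_m)| \leq m\epsilon$.

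The key technical step is to express $S_{L,\T}(\tilde A_1,\ldots,\tilde A_m)$ in terms of discrete solution measures in $\Z_N$. Writing $x_i = (k_i + \xi_i)/N$ with $k_i \in \Z_N$ and $\xi_i \in [0,1)$, the constraint $Lx=0$ in $\T^r$ becomes the integer equation $Lk + L\xi = N\mathbf{c}$ for some carry vector $\mathbf{c} \in \Z^r$. Since $L$ has integer entries and $\xi \in [0,1)^m$, once $N$ is large enough (in terms of $L$) the carry $\mathbf{c}$ can only lie in a finite set $\mathcal{C}(L)$ of cardinality depending only on $L$. Decomposing $\ker_\T L$ according to the carry yields an identity
\[
S_{L,\T}(\tilde A_1,\ldots,\tilde A_m) = \sum_{\mathbf{c} \in \mathcal{C}(L)} w_{\mathbf{c}}(L) \cdot S^{(\mathbf{c})}_{L,\Z_N}(B_1,\ldots,B_m),
\]
where $S^{(\mathbf{c})}_{L,\Z_N}(B_1,\ldots,B_m)$ is the normalized count of tuples $(k_1,\ldots,k_m) \in B_1 \times \cdots \times B_m$ satisfying the affine congruence $Lk \equiv v_{\mathbf{c}} \pmod N$, and each weight $w_{\mathbf{c}}(L)$ is the volume of a polytope determined by $L$---in particular, a positive quantity independent of $N$.

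Since $N$ is prime, Theorem \ref{KSVGC} applies in $\F_N$, each affine congruence being reduced to a kernel equation by translating the $B_i$ by a fixed coset representative. Applying Theorem \ref{KSVGC} to each of the finitely many systems with tolerance $\epsilon/(3|\mathcal{C}(L)|)$ yields discrete removal sets $E_i^{(\mathbf{c})} \subseteq \Z_N$; setting $E'_i = \bigcup_{\mathbf{c}} E_i^{(\mathbf{c})}$ (of size $\leq \epsilon N/3$) and
\[
E_i = \Bigl(\bigcup_{j \in E'_i} I_j\Bigr) \cup (A_i \setminus \tilde A_i),
\]
we get $\mu_\T(E_i) \leq \epsilon$. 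Any point in $\prod(A_i \setminus E_i) \cap \ker_\T L$ would have arc labels $(j_i) \in \prod(B_i \setminus E'_i)$ satisfying one of the coset equations $Lj \equiv v_{\mathbf{c}} \pmod N$, contradicting the removal.

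The principal obstacle is the carry decomposition: one must verify rigorously how Haar measure on $\ker_\T L$ distributes over the $N^m$ product-arc boxes in $\T^m$, and in particular that the set $\mathcal{C}(L)$ and the weights $w_{\mathbf{c}}(L)$ are controlled uniformly in $N$, so that smallness of $S_{L,\T}(\tilde A_1,\ldots,\tilde A_m)$ passes to each $S^{(\mathbf{c})}_{L,\Z_N}$. A secondary point is to confirm that the constant $\delta = \delta(L,\epsilon)$ extracted at the end depends only on $L$ and $\epsilon$ and not on $N$; this amounts to checking that the removal constant in Theorem \ref{KSVGC} is uniform in the field order $|\F_N|$.
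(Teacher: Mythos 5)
Your proposal follows essentially the same route as the paper's own proof: both approximate the $A_i$ by unions of $p$-partition arcs (modulo a preliminary reduction to the coordinate-surjective case), both express $S_{L,\T}$ as a weighted sum over $O_L(1)$ affine congruence classes modulo $p$ --- your ``carry'' decomposition $Lk + L\xi = N\mathbf{c}$ is precisely the paper's decomposition of the index set $J$ into cosets $j_k + (\Lambda \cap \ker_\T L)$ in Lemmas \ref{J-cover}--\ref{sol-measure-relation}, with the weights $w_{\mathbf c}(L)$ being the paper's $\lambda_k$, bounded below via Lemma \ref{estimate} --- and both conclude by applying Theorem \ref{KSVGC} to each shifted system. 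The only point you gloss over is that a surviving solution $x\in\ker_\T L$ might sit in a product-arc box whose intersection with $\ker_\T L$ has measure zero (so that its congruence class was never removed); the paper handles this by also deleting the null sets $\Delta_i = \Z_p/p$ so that any remaining solution lies in an open box $j/p+(0,1/p)^m$, forcing $j/p\in J$.
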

For completeness we prove also the following variant concerning sets with zero solution-measure, which has a much simpler proof.
\begin{proposition}\label{0-removal}
Let $L$ be an $r\times m$ matrix of integers, of full rank $r$, and suppose $A_1,\ldots,A_m$ are measurable subsets of $\T$ such that $S_L(A_1,\ldots,A_m)=0$. Then there are null sets $E_1,\ldots,E_m\subset \T$ such that $(A_1\setminus E_1)\times \cdots \times (A_m\setminus E_m)\cap \ker_\T L=\emptyset$. We can take $A_i\setminus E_i$ to be the set of Lebesgue density points of $A_i$.
\end{proposition}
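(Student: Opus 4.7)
The plan is to prove the contrapositive in the following strong form: if there exists $(x_1,\ldots,x_m)\in \ker_\T L$ for which each $x_i$ is a Lebesgue density point of $A_i$, then $S_L(A_1,\ldots,A_m) > 0$. Granting this, the proposition follows by taking $E_i$ to be the complement in $A_i$ of its set of Lebesgue density points, which is null by the Lebesgue density theorem. Since $\mu_L$ is invariant under translation by elements of $\ker_\T L$, I would first translate each $A_i$ by $-x_i$; the quantity $S_L(A_1,\ldots,A_m)$ is unchanged, and we reduce to the case $x = 0$, so $0$ is a density point of each $A_i$ and in particular $0\in A_i$.

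To work concretely with the kernel I would parametrize it via the Smith normal form of $L$: write $ULV = [D'\,|\,0]$ with $U\in GL_r(\Z)$, $V\in GL_m(\Z)$, and $D' = \mathrm{diag}(d_1,\ldots,d_r)$. The integer unimodular matrix $V$ induces a Haar-measure-preserving automorphism of $\T^m$ that carries $F\times \T^{m-r}$ onto $\ker_\T L$, where $F=\prod_j\tfrac{1}{d_j}\Z/\Z\subset\T^r$, and under this identification $\mu_L$ becomes the product of the uniform probability measure on $F$ with Haar measure on $\T^{m-r}$. Splitting $V = [V_1\,|\,V_2]$ into blocks of widths $r$ and $m-r$, and letting $\alpha_i\in\Z^{m-r}$ denote the $i$-th row of $V_2$, the solution measure takes the form
\begin{equation*}
S_L(A_1,\ldots,A_m) \;=\; \frac{1}{|F|}\sum_{f\in F}\int_{\T^{m-r}}\prod_{i=1}^m 1_{A_i}\bigl((V_1 f)_i + \alpha_i\cdot t\bigr)\,\ud t.
\end{equation*}
The point $0\in\ker_\T L$ corresponds to $(f,t)=(0,0)$, so it suffices to prove that the $f=0$ integrand equals $1$ on a set of positive Lebesgue measure near $0\in\T^{m-r}$.

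The heart of the argument is the local claim that for each $i$, the set $W_i := \{t\in\T^{m-r}:\alpha_i\cdot t\in A_i\}$ has $0$ as a Lebesgue density point. When $\alpha_i = 0$ the preimage is all of $\T^{m-r}$ (as $0\in A_i$), and the claim is trivial. Otherwise $t\mapsto\alpha_i\cdot t$ is locally a linear surjection $\R^{m-r}\to\R$; after rotating coordinates so that $\alpha_i$ points along the first axis, Fubini's theorem identifies the density at $0$ of $W_i$ with the density at $0$ of $A_i$, which equals $1$. A union bound over the finitely many complements then shows $\bigcap_i W_i$ has $0$ as a density point, and hence positive Lebesgue measure in every sufficiently small neighborhood of $0$. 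Plugging this into the $f=0$ term of the displayed expression yields $S_L(A_1,\ldots,A_m)>0$, the desired contradiction. I expect the only genuinely technical step to be this density-preservation claim, which reduces to Fubini after a linear change of variables; everything else is bookkeeping around the Smith-normal-form parametrization.
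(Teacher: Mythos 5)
Your argument is correct, but it follows a genuinely different route from the paper's. The paper works with the presentation of $\ker_\T L$ as $\bigsqcup_i (x_i +_\R \ker_\R L)\cap[0,1)^m$ (their Lemma~\ref{ker-charac}), relates $\mu_{L,\T}$ to the $(m-r)$-dimensional Lebesgue measure $\mu_{L,\R}$ via the quotient integral formula (Lemma~\ref{Haar-Lebesgue}), and then restricts to a small cube around the hypothetical solution $x$; inside that cube the density-point hypothesis makes $1_{A_1}\otimes\cdots\otimes 1_{A_m}$ close to $1$ in $L^1$, and the positivity of $\mu_{L,\R}(I^m\cap\ker_\R L)$ (an open-set argument, or Vaaler's theorem) plus Lemma~\ref{basic} gives $S_L>0$. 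You instead diagonalize via Smith normal form, turning $\ker_\T L$ into $V(F\times\T^{m-r})$ with $\mu_L$ an explicit product measure, and then run a direct Fubini/density-pullback argument on the $f=0$ slice. The payoff of your approach is that it is more self-contained and elementary: it requires no separate positivity input and no Haar-versus-Lebesgue comparison, because the parametrization by $\T^{m-r}$ hands you the positive measure directly. The cost is that you do a chunk of linear-algebra bookkeeping (Smith normal form, blocks $V_1,V_2$) that the paper avoids by reusing infrastructure (Lemmas~\ref{ker-charac}, \ref{Haar-Lebesgue}, \ref{basic}) it already needs for Theorem~\ref{T-system-removal}. Two small points to tighten: (1) a Lebesgue density point of $A_i$ need not lie in $A_i$, so ``in particular $0\in A_i$'' requires taking $A_i\setminus E_i$ to be the density points of $A_i$ that actually belong to $A_i$ (the difference is null, so the proposition's statement is unaffected) — this also makes the $\alpha_i=0$ case airtight; (2) when you rotate $\alpha_i$ onto the first axis you are making an $\R$-linear, not $\Z$-linear, change of variables, so it does not respect $\T^{m-r}$ globally, but since you are only computing local density at $0$ this is fine and is worth saying explicitly.
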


We now discuss briefly some consequences of these results. We say an integer matrix $L$ is  \emph{invariant} if it satisfies $L\mathbf{1}=0$ for the constant vector $\mathbf{1}=(1,1,\ldots,1)$. In this case the system $Lx=0$ is translation invariant in the sense that given any abelian group $G$, for any $x=(x_1,\ldots,x_m)\in G^m$ and $t\in G$, we have $Lx=0$ if and only if $L(x_1+t,\ldots,x_m+t)=0$. In particular, for any $t\in G$ the element $x=(t,\ldots,t)$ is a solution of the system. Therefore, Proposition \ref{0-removal} implies that if $L$ is invariant then any set $A\subset \T$ of positive measure has $S_L(A)>0$. However, the latter positive quantity may depend on the set $A$. By contrast, Theorem \ref{T-system-removal} implies the following analogue of Szemer\'edi's theorem \cite[Theorem 11.1]{T-V} for translation-invariant systems on $\T$.
\begin{theorem}\label{T-Szem}
Let $L$ be an invariant $r\times m$ integer-matrix of full rank $r$. Then for any $\alpha>0$, there exists $c=c(\alpha,L)>0$ such that for any measurable set $A\subset \T$ of measure at least $\alpha$, we have $S_L(A)\geq c$.
\end{theorem}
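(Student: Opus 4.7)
The plan is to deduce Theorem \ref{T-Szem} directly from Theorem \ref{T-system-removal} by a contrapositive argument, exploiting the fact that invariance of $L$ forces the diagonal to sit inside $\ker_\T L$. The guiding observation is: since $L\mathbf{1}=0$, for every $t\in\T$ the ``trivial'' tuple $(t,t,\ldots,t)$ lies in $\ker_\T L$. So if we can only slightly shrink each copy of $A$, the intersection with $\ker_\T L$ cannot become empty, because the diagonal over the shrunken set still survives.

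Concretely, I would argue by contradiction. Suppose no such $c$ exists; then there is some $\alpha>0$ and a sequence of measurable sets $A_n\subset\T$ with $\mu_\T(A_n)\geq \alpha$ and $S_L(A_n)\to 0$. Apply Theorem \ref{T-system-removal} with parameter $\epsilon:=\alpha/(2m)$, obtaining the corresponding $\delta=\delta(L,\epsilon)>0$. For all sufficiently large $n$ we have $S_L(A_n,\ldots,A_n)\leq \delta$, so there exist measurable sets $E_1^{(n)},\ldots,E_m^{(n)}\subset\T$ with $\mu_\T(E_i^{(n)})\leq \epsilon$ such that
\[
(A_n\setminus E_1^{(n)})\times\cdots\times(A_n\setminus E_m^{(n)})\cap\ker_\T L=\emptyset.
\]

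Now set $B_n:=A_n\setminus\bigcup_{i=1}^m E_i^{(n)}$. By subadditivity, $\mu_\T(B_n)\geq \alpha - m\cdot \alpha/(2m)=\alpha/2>0$, so $B_n$ is nonempty. Pick any $t\in B_n$; then $t\in A_n\setminus E_i^{(n)}$ for every $i\in[m]$, and since $L$ is invariant we have $L(t,t,\ldots,t)=t\,L\mathbf{1}=0$, so $(t,\ldots,t)\in\ker_\T L$. This produces a point of $(A_n\setminus E_1^{(n)})\times\cdots\times(A_n\setminus E_m^{(n)})\cap\ker_\T L$, contradicting the emptiness provided by the removal lemma. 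Hence the hypothesis fails, and one can take $c(\alpha,L):=\delta(L,\alpha/(2m))$.

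There is essentially no obstacle here once Theorem \ref{T-system-removal} is in hand; the only thing to verify carefully is the quantitative choice of $\epsilon$ (any $\epsilon<\alpha/m$ works) so that removing the $E_i$'s from $A$ still leaves a set of positive measure, which by invariance automatically contributes diagonal solutions. All the real work has been done in proving the removal lemma itself.
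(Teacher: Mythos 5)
Your proof is correct and takes essentially the same approach as the paper: apply Theorem \ref{T-system-removal} with $\epsilon=\alpha/(2m)$, note that removing all the $E_i$'s leaves a set of measure at least $\alpha/2>0$, and observe that invariance puts the diagonal point $(t,\ldots,t)$ in $\ker_\T L$ for any surviving $t$, yielding the same contradiction. The only superficial difference is that you phrase it via a sequence $A_n$ with $S_L(A_n)\to 0$, whereas the paper argues directly with a single set $A$; the quantitative content is identical.
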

For instance, since arithmetic progressions of arbitrary fixed length are translation invariant, any subset of the circle of positive measure $\alpha$ contains a positive measure $c$ of such progressions, where $c$ depends on $\alpha$ but not on the particular subset.

At the end of the paper we discuss another application of Theorem \ref{T-system-removal}, related to the role that groups such as the circle can play as limit objects for certain additive-combinatorial problems.

The paper has the following outline.
Our proof of Theorem \ref{T-system-removal} reduces the problem to the discrete case, where one can appeal to Theorem \ref{KSVGC}. This involves first approximating each set $A_i$ by a simpler set that can be viewed as a subset $A_i'$ of a cyclic group $\Z_p$ for $p$ a prime. This is done in Section \ref{section:discretization}. The relationship between the solution-measure of the approximating sets and the solution-counts on $\Z_p$ of the sets $A_i'$ is captured in Lemma \ref{sol-measure-relation}. This relationship is somewhat subtle, in that expressing the solution-measure in terms of the latter discrete solution-counts requires many different shifts of the set $A_1' \times \cdots \times A_m'$, each shift having a  corresponding weight. We then require some control on these weights, which is obtained in Section \ref{section:weights} using a simple geometric characterization of $\ker_\T L$ and its measure $\mu_L$. The proof of Theorem \ref{T-system-removal} is then completed in Section \ref{section:main_proofs}, where we also deduce Theorem \ref{T-Szem} and prove Proposition \ref{0-removal}. Finally we close with the above-mentioned application and some further remarks in Section \ref{section:remarks}.

\section{A discrete decomposition of the solution measure}\label{section:discretization}

\subsection{Approximating measurable sets}
For any positive integer $N$, we refer to the partition $\T=\bigsqcup_{x\in [N]}\, [(x-1)/N,x/N)$ as the $N$\emph{-partition of} $\T$, and we say $A\subset \T$ is $N$\emph{-measurable} if $A$ is a union of intervals from the $N$-partition. The aim in this subsection is to show that, for the  proof of Theorem \ref{T-system-removal}, the sets $A_i$ can be assumed to be $p$-measurable for some large prime $p$.

\begin{lemma}\label{approx}
Let $L$ be an $r\times m$ matrix of integers, of full rank $r$, such that any $r\times (m-1)$ submatrix of $L$ also has rank $r$.
Let $\delta>0$ and let $C_1,\ldots, C_m$ be measurable subsets of $\T$. Then for any large $p\in \N$, there exist $p$-measurable sets $A_i\subset \T$ such that $\mu_\T(C_i\Delta A_i)\leq \delta/m$ for all $i\in [m]$, and $|S_L(C_1,\ldots,C_m)-S_L(A_1,\ldots,A_m)|\leq \delta$.
\end{lemma}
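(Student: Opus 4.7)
The plan is to choose each $A_i$ as an $L^1$-approximation of $C_i$, and to use a multilinearity (telescoping) argument to transfer this closeness to the solution measures. The two conclusions of the lemma thus decouple: first, small symmetric difference at the level of sets, and second, small change in $S_L$ once those symmetric differences are controlled.

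For the approximation step, I would invoke outer regularity of Lebesgue measure on $\T$: for any measurable $C \subset \T$ and $\eta > 0$, there is a finite disjoint union of intervals $U$ with $\mu_\T(C \Delta U) < \eta/2$. Rounding each of the finitely many endpoints of $U$ to the nearest multiple of $1/p$ yields a $p$-measurable set $A$ with $\mu_\T(U \Delta A) = O(1/p)$, where the implicit constant depends only on the number of intervals of $U$; for $p$ sufficiently large this is below $\eta/2$. Applying this to each $C_i$ with $\eta = \delta/m$ produces $p$-measurable sets $A_i$ satisfying the first conclusion, for all sufficiently large $p$.

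To control the difference of solution measures I would use the telescoping identity
$$S_L(C_1, \ldots, C_m) - S_L(A_1, \ldots, A_m) = \sum_{j=1}^m S_L\bigl(A_1, \ldots, A_{j-1}, 1_{C_j} - 1_{A_j}, C_{j+1}, \ldots, C_m\bigr),$$
where $S_L$ is understood in the sense of \eqref{SolMeas} applied to a signed function in the $j$-th slot. Each term on the right is bounded in absolute value via the estimate
$$\Bigl|\int_{\ker_\T L} h(x_j) \prod_{i \neq j} g_i(x_i) \,\ud\mu_L(x)\Bigr| \leq \|h\|_{L^1(\T)},$$
applied with $h = 1_{C_j} - 1_{A_j}$ and $|g_i| \leq 1$. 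This inequality reduces to showing that the pushforward of $\mu_L$ under the projection $\pi_j : \ker_\T L \to \T$, $x \mapsto x_j$, coincides with $\mu_\T$. Since $\pi_j$ is a continuous homomorphism of compact abelian groups, its pushforward of Haar measure is the Haar measure on its image, so it suffices to show that $\pi_j$ is surjective. This is precisely where the hypothesis that the $r \times (m-1)$ submatrix obtained by deleting column $j$ of $L$ has rank $r$ enters: that rank condition makes the induced real-linear map $\R^{m-1} \to \R^r$ surjective, hence on passing to the quotient the homomorphism $\T^{m-1} \to \T^r$ given by that submatrix is surjective. Consequently, for any $a \in \T$ there exist $(x_i)_{i \neq j} \in \T^{m-1}$ completing $a$ to a point of $\ker_\T L$ with $j$-th coordinate $a$, so $a$ is in the image of $\pi_j$. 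Summing the $m$ telescoped terms then bounds the total error by $\sum_{j=1}^m \mu_\T(C_j \Delta A_j) \leq \delta$.

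The main point of delicacy is the surjectivity of $\pi_j$, which rests squarely on the $r \times (m-1)$ submatrix hypothesis; without it, $\pi_j(\ker_\T L)$ could be a proper closed (possibly finite) subgroup of $\T$, and an $L^1$ perturbation of $C_j$ supported away from this image would leave $S_L$ unchanged while still contributing fully to $\mu_\T(C_j \Delta A_j)$, breaking the required bound. The remaining ingredients — outer regularity, multilinearity of $S_L$, and the pushforward description of Haar measure under continuous surjective homomorphisms of compact groups — are standard.
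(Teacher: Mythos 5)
Your proof is correct and follows essentially the same route as the paper: approximate each $C_i$ in $L^1$ by a $p$-measurable set, telescope the difference in $S_L$ by multilinearity, and bound each term via the fact that the coordinate projection $\pi_j\colon\ker_\T L\to\T$ pushes Haar measure to Haar measure, its surjectivity being exactly what the $r\times(m-1)$ rank hypothesis supplies (the paper packages this push-forward estimate as Lemma~\ref{basic}). The only cosmetic difference is in the approximation step, where the paper first passes to a $2^n$-measurable set and then snaps to the $p$-grid rather than rounding interval endpoints directly, but this is immaterial.
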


The submatrix condition in this lemma can be assumed without loss of generality when proving Theorem \ref{T-system-removal}. Indeed, suppose that deleting column $j$ from $L$ yields a matrix $L'$ of rank $r-1$. Then for some non-zero vector $v\in \Z^r$, we have $v^\transpose L'=0$. Since $L$ has rank $r$, the $j$th entry of $v^\transpose L$ must be a non-zero integer $\ell$. Now if $x\in A_1\times\cdots\times A_m$ satisfies $Lx=0$, then in $\T$ we have $\ell \, x_j=(v^\transpose L)\cdot x= v^\transpose \cdot (L x)=0$. Therefore we can delete all such solutions $x$ by removing the finite set $\{a\in A_j:\ell\,a=0\}$ from $A_j$, so Theorem \ref{T-system-removal} is clearly true for this system.

To prove Lemma \ref{approx} we use the following basic result, which will also be used later.
\begin{lemma}\label{basic}
Let $G$ be a locally compact abelian group with a Haar measure $\mu$ and let $H$ be a closed subgroup of $G^m$ with a Haar measure $\mu_H$ such that the projection $\pi : H \to G$, $x \mapsto x_i$ is surjective. Then there is a constant $c > 0$ such that for any functions $f_1,\ldots,f_m:G\to \C$ with $\|f_j\|_{L_\infty}\leq 1$ for all $j$, we have \[\Big| \int_H\; f_1(x_1)f_2(x_2)\cdots f_m(x_m) \ud \mu_H(x) \Big| \leq c \norm{f_i}_{L_1}.\]
If $G,H$ are compact abelian groups and $\mu_G,\mu_H$ are their respective unique probability Haar measures, then we can take $c = 1$.
\end{lemma}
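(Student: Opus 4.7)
The plan is to reduce the multi-variable integral to a one-variable one using the bound $|f_j|\leq 1$ for $j\neq i$, and then identify the pushforward of $\mu_H$ under the coordinate projection $\pi$ with a scalar multiple of $\mu_G$ via uniqueness of Haar measure.

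First I would observe that since $\|f_j\|_{L_\infty}\leq 1$ for every $j$, the integrand satisfies $|f_1(x_1)\cdots f_m(x_m)|\leq |f_i(x_i)|$ pointwise on $H$. Pulling absolute values inside the integral, the task reduces to showing $\int_H |f_i(x_i)|\ud\mu_H(x)\leq c\,\|f_i\|_{L_1}$ for some constant $c>0$ (with $c=1$ in the compact probability case).

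The main step is to consider the pushforward measure $\nu=\pi_*\mu_H$ on $G$. Since $\pi$ is a continuous, surjective group homomorphism and $\mu_H$ is translation-invariant, $\nu$ is a translation-invariant Borel measure on $G$. Provided $\nu$ is Radon (which is automatic when $H$ is compact, since then $\nu(G)=\mu_H(H)<\infty$), uniqueness of Haar measure forces $\nu=c\,\mu_G$ for some $c\geq 0$, and surjectivity of $\pi$ rules out $c=0$. A change of variables then yields
\[\int_H |f_i(x_i)|\ud\mu_H(x)=\int_G |f_i(y)|\ud\nu(y)=c\,\|f_i\|_{L_1},\]
as required. In the compact probability case, $\nu$ is itself a probability measure, and being translation-invariant it must coincide with $\mu_G$, yielding $c=1$.

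The only point needing some care is the verification that $\nu$ is locally finite in the general locally compact setting, which is what legitimises the appeal to uniqueness of Haar measure; in the compact case relevant to the paper's applications this is immediate, so the core of the argument is simply the translation-invariance identification of $\pi_*\mu_H$ with a multiple of $\mu_G$.
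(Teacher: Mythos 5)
Your argument is essentially identical to the paper's: bound the integrand by $|f_i(x_i)|$, push $\mu_H$ forward under the coordinate projection $\pi$, invoke uniqueness of Haar measure to identify $\pi_*\mu_H$ with $c\,\mu_G$, and observe $c=1$ in the compact probability case. The extra note about verifying local finiteness of $\pi_*\mu_H$ in the general locally compact setting is a reasonable point of care that the paper leaves implicit, but it does not constitute a different route.
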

\begin{proof}
The left side above is at most $\int_H \abs{f_i(x_i)} \ud\mu_H(x)=\int_H \abs{f_i\circ \pi(x)} \ud\mu_H(x)$. The map $\pi$ is a continuous surjective homomorphism from $H$ to $G$, whence the measure $\mu_H\circ \pi^{-1}$ is a Haar measure on $G$, so by uniqueness there exists $c>0$ such that $\mu_H\circ \pi^{-1}=c \mu_G$, and $c =1$ if $\mu_G,\mu_H$ are both probability measures. It follows that $\int_H \abs{f_i\circ \pi(x)} \ud\mu_H(x)=c \int_G \abs{f_i(y)}\ud\mu_G(y)=c \norm{f_i}_{L_1}$.
\end{proof}

\begin{proof}[Proof of Lemma \ref{approx}]
First, by basic measure theory there is some large integer $N=2^n$ such that each $C_i$ can be approximated within $\delta/2m$ by an $N$-measurable set $B_i$. Then, for any $p$ large enough in terms of $N$, we can approximate each $B_i$ by a $p$-measurable set $A_i$ with $\mu_\T(B_i\Delta A_i)\leq \delta/2m$, simply by taking $A_i$ to be the union of the intervals in the $p$-partition of $\T$ that are contained in $B_i$. Thus $\mu_\T(C_i \Delta A_i) \leq \delta/m$.

Now, by the multilinearity of $S_L$, we have
\begin{align*}
\abs{S_L(C_1,\ldots,C_m)-S_L(A_1,\ldots,A_m)} &\leq \sum_{i\in [m]} \abs{ S_L (1_{A_1},\ldots,1_{A_{i-1}},1_{C_i}-1_{A_i},1_{C_{i+1}},\ldots,1_{C_m}) },
\end{align*}
and the assumption that every $r \times (m-1)$ submatrix of $L$ has rank $r$ is easily seen to imply that each projection $\ker_\T L \to \T$, $x \mapsto x_i$ is surjective, whence by Lemma \ref{basic} the $i$th summand above is at most 
$\norm{1_{C_i} - 1_{A_i}}_{L_1(\T)} \leq \mu_\T(C_i\Delta A_i)\leq \delta/m$.
%Indeed, we just need to show that given any $\theta_i\in \T$ we can find $x\in [0,1)^m$ satisfying $Lx\in\Z^r$ over $\R$ and $x(i)=\theta_i$, so it suffices to solve the system $L' y=-L^i\theta_i$, where $L^i$ is the $i$th column of $L$ and $L'$ is the $r\times (m-1)$ matrix obtained by removing $L^i$ from $L$. We can solve this system over $\R$ because by assumption $L'$ has rank $r$. Then, defining $x'\in \R^m$ by $x'(j)=y(j)$ for $j\neq i$ and $x'(i)=\theta_i$, we have $Lx'=0$, so we can take $x$ to be the reduction of $x'$ modulo 1 coordinate-wise.
\end{proof}

\subsection{The main formula}
From now on, given a $p$-measurable set $A\subset\T$, we denote by $A'$ the subset of $\Z_p$ defined by $1_{A'}(x)=1_A(x/p)$. In order to apply Theorem \ref{KSVGC}, we express $S_L(A)$ in terms of solution measures in $\Z_p$ involving $A'$. This is done in Lemma \ref{sol-measure-relation} below.
 
For any positive integer $p$, let $\Lambda=\Lambda(p)$ denote the discrete torus $\Z_p^m/p\leq \T^m$, with elements denoted $j/p=\big(j(1)/p,\ldots,j(m)/p\big),\, j\in \Z_p^m$.
\begin{definition}
For any $r\times m$ integer matrix $L$ and any positive integer $p$, we define
\[J=J(L,p)=\{j/p\in\Lambda: \mu_L\big(\;(j/p+[0,1/p)^m)\;\cap \ker_\T L\big)>0\}.\]
\end{definition}
The fact that $J$ consists of $O_L(1)$ shifts of $\Lambda\cap\ker_\T L$ is central to the whole argument.
\begin{lemma}\label{J-cover}
For some $K_L>0$ depending only on $L$, for any $p$ there exist elements $j_1/p,j_2/p,\ldots,j_K/p\in J(L,p)$, $K\leq K_L$, such that $J = \bigsqcup_{k\in [K]}\big(j_k/p+(\Lambda\cap\ker_\T L)\big)$.
\end{lemma}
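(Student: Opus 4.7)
My plan is to first establish that $J$ is a union of cosets of $\Lambda\cap\ker_\T L$ in $\Lambda$, and then bound the number of such cosets by a constant depending only on $L$ via a direct geometric count.

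For the coset structure, I would argue that if $j/p\in J$ and $\lambda\in\Lambda\cap\ker_\T L$, then $j/p+\lambda\in J$: translating the box $j/p+[0,1/p)^m$ by $\lambda$ yields the box at $j/p+\lambda$, whose intersection with $\ker_\T L$ is the translate by $\lambda\in\ker_\T L$ of the original intersection, and hence has the same $\mu_L$-measure by translation invariance of the Haar measure on $\ker_\T L$. Distinct cosets of $\Lambda\cap\ker_\T L$ in $\Lambda$ are automatically disjoint, so $J$ decomposes as a disjoint union of cosets, and it remains to bound the number of cosets $K=|J|/|\Lambda\cap\ker_\T L|$ by a constant $K_L$ depending only on $L$.

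I would estimate numerator and denominator both as multiples of $p^d$, where $d=m-r$. For the denominator, the Smith normal form $L=UDV$ with $D=(\mathrm{diag}(d_1,\ldots,d_r)\mid 0)$ and $U,V$ unimodular gives
\[
|\Lambda\cap\ker_\T L|=|\ker(L:\Z_p^m\to\Z_p^r)|=p^d\prod_{i=1}^r\gcd(d_i,p)\geq p^d.
\]
For the numerator, decompose $\ker_\T L=\bigsqcup_{i=1}^{N_L}(t_i+H_0)$ into its connected components, where $H_0\cong\T^d$ is the identity component and $N_L=|\Z^r/L(\Z^m)|$ depends only on $L$. Choose $r$ columns of $L$ indexed by $I'\subset[m]$ forming an invertible submatrix $L_{I'}$, set $I=[m]\setminus I'$, and note that $V=\ker L$ is the graph of the linear map $T:=-L_{I'}^{-1}L_I$. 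The coordinate projection $\pi_I:t_i+H_0\to\T^I$ is then a covering map of degree $N_L'$ depending only on $L$. For each $d$-cube $C$ of the $p$-partition of $\T^I$ (of which there are $p^d$), the preimage $\pi_I^{-1}(C)\cap(t_i+H_0)$ consists of $N_L'$ local graphs $c\mapsto(c,s_k(c))$ with each $s_k$ affine of norm $\|T\|$, so $s_k(C)\subset\T^{I'}$ has diameter at most $\|T\|\sqrt{d}/p$ and, once $p$ exceeds a threshold depending only on $L$, meets at most $O_L(1)$ cubes of the $p$-partition of $\T^{I'}$. Summing over the $p^d$ cubes $C$ shows that at most $O_L(p^d)$ cubes of $\Lambda$ meet $t_i+H_0$ in positive $d$-measure, hence $|J|\leq N_L\cdot O_L(p^d)=O_L(p^d)$; dividing by $|\Lambda\cap\ker_\T L|\geq p^d$ yields $K\leq O_L(1)=:K_L$. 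For the finitely many small $p$ below the threshold, the crude bound $K\leq|\Lambda|=p^m$ gives a constant bound that can be absorbed into $K_L$.

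The main technical obstacle is this upper bound on $|J|$, which amounts to showing that a $d$-dimensional affine subtorus of $\T^m$ defined by integer linear equations passes through only $O_L(p^d)$ cubes of the $p$-partition. The essential point is that the slopes of this subtorus, controlled by the entries of $L_{I'}^{-1}L_I$, depend only on $L$, so for $p$ sufficiently large each local piece of the subtorus sitting above a $d$-cube of side $1/p$ is confined to $O_L(1)$ lattice cubes in the complementary $I'$-directions.
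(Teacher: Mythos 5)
Your proof is correct, but it takes a genuinely different and far more elaborate route than the paper's. The paper's argument is short and algebraic: if $j/p\in J$, then there is $z\in[0,1/p)^m$ with $L(j/p+z)\in\Z^r$, forcing $L(j/p)\equiv -L(pz)/p\pmod 1$ with $L(pz)\in L([0,1)^m)\cap\Z^r$, a finite set depending only on $L$; thus $L(J)$ has $O_L(1)$ elements, and since $L^{-1}(L(j_k/p))\cap\Lambda = j_k/p+(\Lambda\cap\ker_\T L)$, the set $J$ is covered by $O_L(1)$ cosets. You instead argue ``downward'': first showing $J$ is a union of cosets via translation-invariance of $\mu_L$ (the same observation, rephrased), then computing the number of cosets as $K=|J|/|\Lambda\cap\ker_\T L|$ by bounding numerator and denominator separately — Smith normal form for $|\Lambda\cap\ker_\T L|\ge p^d$, and a covering-map, local-graph argument showing that the affine $d$-dimensional subtorus $\ker_\T L$ passes through only $O_L(p^d)$ grid cubes. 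Both are sound arguments (your covering degree $N_L'$ and the component count $N_L=|\Z^r/L(\Z^m)|$ do depend only on $L$, since $I$ can be fixed once and for all); the paper's is much shorter, needs no threshold on $p$, and avoids Smith normal form and Lie-theoretic ingredients, while yours gives a concrete geometric picture of why $K=O_L(1)$ — namely, that $|J|$ and $|\Lambda\cap\ker_\T L|$ are both $\Theta_L(p^{m-r})$.
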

\begin{proof}
If $j/p\in J$, then $L(j/p)$ lies in $-\left( L([0,1)^m) \cap \Z^r \right)/p \mod 1$. The latter finite set has size bounded in terms of $L$ alone. Choosing $j_1/p,\ldots,j_K/p\in J$ such that $L$ is a bijection from $\{j_1/p,\ldots,j_K/p\}$ to $L(J)$, we then have $K = O_L(1)$, and the result follows since $J\subset L^{-1} (L(J))$ and $L^{-1}\big(L(j_k/p)\big)\cap \Lambda=j_k/p+(\Lambda\cap\ker_\T L)$.
\end{proof}
We can now prove the main formula.
\begin{lemma}\label{sol-measure-relation}
Let $L$ be an $r\times m$ matrix of integers of full rank $r$, let $p$ be a large prime, and let $A_1,\ldots, A_m$ be $p$-measurable subsets of $\T$. Then there exist $j_1/p,\ldots,j_K/p\in J$, with $K \leq K_L$, such that
\begin{equation}\label{sol-rel-formula}
S_{L,\T}(A_1,\ldots,A_m)=\sum_{k\in [K]} \lambda_k\; S_{L,\Z_p}\Big(A_1'-j_k(1),\ldots, A_m'-j_k(m)\Big),
\end{equation}
where $\lambda_k= p^{m-r} \mu_{L,\T}\Big( \big( j_k/p+[0,1/p)^m \big)\cap \ker_\T L \Big)$.
\end{lemma}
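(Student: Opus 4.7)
The plan is to decompose the integral defining $S_{L,\T}(A_1,\ldots,A_m)$ according to the $p$-adic box partition of $\T^m$, and then translate each nonzero contribution back to one of the canonical representatives $j_k/p$ supplied by Lemma \ref{J-cover}. Since each $A_i$ is $p$-measurable, the product $A_1\times\cdots\times A_m$ is the disjoint union of the half-open boxes $x/p+[0,1/p)^m$ over $x\in A_1'\times\cdots\times A_m'$ (where I identify $x\in \Z_p^m$ with the element $x/p\in\Lambda$). Intersecting with $\ker_\T L$ and applying $\mu_L$ gives
\[
S_{L,\T}(A_1,\ldots,A_m)=\sum_{x\in A_1'\times\cdots\times A_m'} \mu_L\big((x/p+[0,1/p)^m)\cap\ker_\T L\big).
\]

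Next I would observe that if the summand for $x/p$ is nonzero, then $x/p$ must lie in $J$, so by Lemma \ref{J-cover} there is a unique $k\in[K]$ and a unique $t\in\Lambda\cap\ker_\T L$ with $x/p=j_k/p+t$. Since $t\in \ker_\T L$, translation by $t$ maps $\ker_\T L$ to itself and preserves the Haar measure $\mu_L$, so
\[
\mu_L\big((x/p+[0,1/p)^m)\cap\ker_\T L\big)=\mu_L\big((j_k/p+[0,1/p)^m)\cap\ker_\T L\big)=\lambda_k/p^{m-r}.
\]
The condition $x/p-j_k/p\in\Lambda\cap\ker_\T L$ unpacks to $x-j_k\in\ker_{\Z_p}L$ in $\Z_p^m$, i.e.\ $y:=x-j_k$ lies in $\ker_{\Z_p}L$, and $x\in A_1'\times\cdots\times A_m'$ is equivalent to $y\in (A_1'-j_k(1))\times\cdots\times(A_m'-j_k(m))$. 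Grouping by $k$ therefore yields
\[
S_{L,\T}(A_1,\ldots,A_m)=\sum_{k\in[K]}\frac{\lambda_k}{p^{m-r}}\;\big|\ker_{\Z_p}L\cap\big((A_1'-j_k(1))\times\cdots\times(A_m'-j_k(m))\big)\big|.
\]

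Finally, because $L$ has rank $r$ over $\Q$ and $p$ is taken prime and large enough that every nonzero $r\times r$ minor of $L$ is invertible mod $p$, the matrix $L$ still has rank $r$ over $\F_p$, so $|\ker_{\Z_p}L|=p^{m-r}$. Dividing the cardinality above by $p^{m-r}$ thus produces exactly $S_{L,\Z_p}(A_1'-j_k(1),\ldots,A_m'-j_k(m))$, giving the claimed identity \eqref{sol-rel-formula}. The main obstacle is really conceptual rather than technical: identifying that the intersections with the $p$-adic boxes are indexed by a bounded set $J$ of cosets of $\Lambda\cap\ker_\T L$, and that the translation invariance of $\mu_L$ on each coset reduces everything to the single weight $\lambda_k$ per coset, so that the continuous solution-measure on $\T$ is expressed as a finite, $L$-controlled positive combination of discrete solution counts on $\Z_p$; once this structure is recognised, the bookkeeping above is routine.
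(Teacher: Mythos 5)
Your proof is correct and follows essentially the same route as the paper: decompose into $p$-adic boxes, identify the nonzero contributions with $J$, apply Lemma~\ref{J-cover} and the translation invariance of $\mu_L$, and identify $\Lambda\cap\ker_\T L$ with $\ker_{\Z_p}L$. Your only addition is to make explicit the normalization $|\ker_{\Z_p}L|=p^{m-r}$ (valid for $p$ large so that $L$ has rank $r$ over $\F_p$), a detail the paper leaves implicit in its final line.
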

\begin{proof}
We have $S_{L,\T}(A_1,\ldots,A_m)$ equal to
\[
\mu_L\big((A_1\times\cdots\times A_m) \cap\ker_\T L\big)=\sum_{j\in A_1' \times \cdots \times A_m'}
\mu_L\Big( \big(j/p+[0,1/p)^m \big)\cap \ker_\T L \Big).
\]
By the definition of the set $J$, this sum can be restricted directly to the shifts $j_1/p+\ker_\T L,\ldots, j_K/p+\ker_\T L$ occurring in Lemma \ref{J-cover}; since the subgroup $\Lambda\cap \ker_\T L$ of $\T^m$ is clearly isomorphic to the subgroup $\ker_{\Z_p} L$ of $\Z_p^m$, we see that
$S_{L,\T}(A_1,\ldots,A_m)$ equals
\begin{eqnarray*}
&  & \sum_{k\in [K]}\;\; \sum_{j\in A_1'\times \cdots \times A_m' \cap (j_k+\ker_{\Z_p} L)}
\mu_L\Big( ( j/p+[0,1/p)^m )\cap \ker_\T L \Big)\\
& = & \sum_{k \in [K]}\;\; \sum_{j\in (A_1'\times \cdots \times A_m'-j_k) \cap \ker_{\Z_p} L}
\mu_L\Big( \big( (j+j_k)/p+[0,1/p)^m \big)\cap \ker_\T L \Big).
\end{eqnarray*}
By invariance of $\mu_L$ under translation by $j/p\in \ker_\T L$, this equals
\[
\sum_{k\in [K]}\;\; \sum_{j\in (A_1'\times \cdots \times A_m'-j_k) \cap \ker_{\Z_p} L}
\mu_L\Big( \big( j_k/p+[0,1/p)^m \big)\cap \ker_\T L \Big),
\]
and \eqref{sol-rel-formula} follows.
\end{proof}

\section{A positive lower bound for the weights $\lambda_k$}\label{section:weights}

For each $j\in \Z_p^m$, let $\lambda(j)=p^{m-r} \mu_L\Big( \big( j/p+[0,1/p)^m \big)\cap \ker_\T L \Big)$.\\
In order to use Lemma \ref{sol-measure-relation}, we require that the weights $\lambda_k$ be bounded away from 0, uniformly over $p$. Such a bound is guaranteed by the following result.
\begin{lemma}\label{estimate}
Let $L$ be an $r\times m$ matrix of integers of full rank $r$. Then there exists $\lambda^*>0$ depending only on $L$ such that, for any large positive integer $p$, for any $j/p\in J(L,p)$, we have $\lambda(j) \geq \lambda^*$.
\end{lemma}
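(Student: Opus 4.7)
The plan is to convert the weights $\lambda(j)$ into an $(m-r)$-dimensional Euclidean volume, and then use integrality constraints arising from $\Gamma := L^{-1}(\Z^r)$ to reduce the whole question to a finite minimisation depending only on $L$. Let $V := \ker_\R L$, an $(m-r)$-dimensional subspace of $\R^m$. Then $\ker_\T L$ is the image of $\Gamma$ under the quotient $\R^m \to \T^m = \R^m/\Z^m$; its identity component is the subtorus $V/(V \cap \Z^m)$, and it has $N_L := [\Z^r : L(\Z^m)]$ connected components. Writing $v(V)$ for the $(m-r)$-volume of a fundamental domain of the lattice $V \cap \Z^m$ inside $V$, viewing $\ker_\T L$ as a compact Lie group identifies its Haar probability measure $\mu_L$ with the $(m-r)$-dimensional Euclidean volume inherited from $\R^m$, rescaled by $1/(N_L v(V))$. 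Concretely, for $p$ large enough in terms of $L$ and any $U \subset \T^m$ of diameter $O(1/p)$,
\begin{equation*}
\mu_L\bigl(U \cap \ker_\T L\bigr) \;=\; \tfrac{1}{N_L\,v(V)}\, \mathrm{vol}_{m-r}\bigl(\widetilde{U} \cap \Gamma\bigr),
\end{equation*}
where $\widetilde{U} \subset \R^m$ is the unique small lift of $U$.

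Applying this with $U = Q_j := j/p + [0,1/p)^m$, the set $\widetilde{Q}_j \cap \Gamma$ is a finite disjoint union of pieces, one per coset of $V$ in $\Gamma$ that meets $\widetilde{Q}_j$. For each such piece, pick any point $y$ in it and let $s := p(y - j/p) \in [0,1)^m$; then the piece equals $y + V \cap (-s/p + [0,1/p)^m)$, which has $(m-r)$-volume $p^{-(m-r)} F(Ls)$, where
\begin{equation*}
F(\ell) := \mathrm{vol}_{m-r}\bigl(V \cap (-s + [0,1)^m)\bigr) \quad \text{for any } s \in \R^m \text{ with } Ls = \ell.
\end{equation*}
This $F$ is well-defined: shifting $s$ by any $v \in V$ translates the half-open cube $-s + [0,1)^m$ by $-v$, and intersecting with $V$ then produces a set that is the translate by $-v \in V$ of the original, hence of the same $(m-r)$-volume. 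Summing over pieces gives
\begin{equation*}
\lambda(j) \;=\; \tfrac{1}{N_L\,v(V)} \sum_{\text{pieces}} F\bigl(L s_{\text{piece}}\bigr).
\end{equation*}

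The crux is the finiteness of the set of possible values $L s_{\text{piece}}$. For each piece, $y \in \Gamma$ forces $Ly \in \Z^r$, so $L s_{\text{piece}} = pLy - Lj \in \Z^r$; and $s_{\text{piece}} \in [0,1)^m$ forces $L s_{\text{piece}} \in L([0,1)^m)$, a set bounded purely in terms of $L$. Hence $L s_{\text{piece}} \in I_L := L([0,1)^m) \cap \Z^r$, a finite set depending only on $L$. Setting
\begin{equation*}
\lambda^* := \tfrac{1}{N_L\,v(V)}\, \min\bigl\{F(\ell) : \ell \in I_L,\ F(\ell) > 0\bigr\}
\end{equation*}
(a positive minimum over a finite collection of positive reals; if the collection is empty then no $j/p$ ever lies in $J$ and the statement is vacuous), the condition $j/p \in J$ guarantees that at least one piece contributes a positive $F$ term, yielding $\lambda(j) \geq \lambda^*$, as required. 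The main step requiring care is the Haar-measure identity in the first display, which relies on correctly accounting for both the $N_L$ connected components of $\ker_\T L$ and the normalising factor $v(V)$ coming from the lattice $V \cap \Z^m$; the translation-invariance of $F$ and the finiteness of $I_L$ are then short observations.
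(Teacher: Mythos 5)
Your argument is correct and follows essentially the same route as the paper's Lemma \ref{reduct}: identify $\mu_L$ with normalised $(m-r)$-dimensional Euclidean volume on cosets of $\ker_\R L$ (the paper establishes this carefully in Lemma \ref{Haar-Lebesgue} via the quotient integral formula, whereas you assert it as a known fact about Haar measure on a compact Lie group), then exploit the integrality and boundedness of the shift to reduce the possible weights to a finite list of positive numbers depending only on $L$. Your finiteness step --- $Ls = pLy - Lj \in \Z^r$ directly from $y \in L^{-1}(\Z^r)$ and $j \in \Z^m$, combined with $s \in [0,1)^m$ --- is a touch more streamlined than the paper's, which passes through an auxiliary finite set $U$ built from rationally chosen base points $x_i$, but the underlying mechanism (and the resulting $\lambda^*$ as a minimum of finitely many positive volumes) is the same.
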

The proof relies on a compactness argument coupled with the geometric characterization of $\mu_L$ given in Lemma \ref{Haar-Lebesgue} below. In what follows we always consider $\T^m$ as the set $[0,1)^m\subset \R^m$ with coordinate-wise addition modulo 1 (and with topology the quotient topology on $\R^m/\Z^m$). Then $\ker_\T L$ is the closed subgroup $\{x\in [0,1)^m: Lx\in \Z^r\}\leq \T^m$. This subgroup is described more precisely by the following simple result.

\begin{lemma}\label{ker-charac}
Let $x_1,\ldots,x_M$ be a choice of points in $[0,1)^m$ such that the linear map $L$ over $\R$ gives a bijection $\{x_i:i\in [M]\}\to L([0,1)^m)\cap\Z^r$. Then we have the partition
\begin{equation}\label{char}
\ker_\T L = \bigsqcup_{i\in [M]} \big((x_i+_\R\ker_\R L)\cap [0,1)^m\big).
\end{equation}
\end{lemma}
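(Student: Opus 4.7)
The plan is to prove the three set-theoretic claims: inclusion in both directions and pairwise disjointness. The whole argument reduces to unwinding the definitions and invoking the bijection assumption on the $x_i$.

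First I would establish the inclusion $\supseteq$. Take any $y \in (x_i +_\R \ker_\R L) \cap [0,1)^m$. Then $y - x_i \in \ker_\R L$, so $Ly = Lx_i$, which lies in $L([0,1)^m) \cap \Z^r$ by the bijection hypothesis; in particular $Ly \in \Z^r$. Combined with $y \in [0,1)^m$, this places $y$ in $\ker_\T L = \{x \in [0,1)^m : Lx \in \Z^r\}$.

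Next I would establish the reverse inclusion $\subseteq$. Given $y \in \ker_\T L$, by definition $y \in [0,1)^m$ and $Ly \in \Z^r$, so $Ly \in L([0,1)^m) \cap \Z^r$. By the bijection assumption there exists a (unique) index $i \in [M]$ with $Lx_i = Ly$, i.e.\ $y - x_i \in \ker_\R L$, so $y \in (x_i +_\R \ker_\R L) \cap [0,1)^m$.

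Finally I would verify the disjointness needed to make the union a partition. If $y$ lies in $(x_i +_\R \ker_\R L) \cap (x_j +_\R \ker_\R L)$ for some $i,j$, then applying $L$ gives $Lx_i = Ly = Lx_j$, which by injectivity of the bijection $\{x_k : k \in [M]\} \to L([0,1)^m) \cap \Z^r$ forces $i = j$. This gives the asserted partition \eqref{char}.

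There is essentially no serious obstacle here: the statement is a direct consequence of the bijectivity assumption on $\{x_i\}$ together with the definition of $\ker_\T L$ as the preimage of $\Z^r$ in $[0,1)^m$ under $L$. The only mild subtlety is to remember that $\ker_\T L$ is being identified with a subset of the fundamental domain $[0,1)^m$ rather than of $\R^m/\Z^m$, which is precisely why one intersects each coset $x_i +_\R \ker_\R L$ with $[0,1)^m$.
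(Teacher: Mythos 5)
Your proof is correct. The paper states this lemma without proof, introducing it as ``the following simple result,'' and your argument---a direct verification of both inclusions and disjointness by unwinding the definition $\ker_\T L = \{x \in [0,1)^m : Lx \in \Z^r\}$ and invoking the bijection hypothesis---is exactly the routine check the authors left implicit.
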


Here we use $+_\R$ to denote addition in $\R^m$ (or more generally addition over $\R$), to distinguish it from addition in $\T^m$, which we may denote by $+_\T$.
We now use \eqref{char} to relate the Haar measure $\mu_L$ to the $(m-r)$-dimensional Lebesgue measure on $\ker_\R L$, which we denote $\mu_{L,\R}$.

\begin{lemma}\label{Haar-Lebesgue}
For any Borel set $A\subset \ker_\T L$, let $A^{(i)}:=A\cap (x_i+_\R\ker_\R L)$ for each $i\in [M]$. Then there is a constant $c_L > 0$ such that
$\mu_{L,\T}(A)= c_L \sum_i \mu_{L,\R}(A^{(i)}-_\R x_i)$.
\end{lemma}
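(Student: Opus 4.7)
The plan is to define the set function
\[ \nu(A) := \sum_{i \in [M]} \mu_{L,\R}\bigl(A^{(i)} -_\R x_i\bigr) \]
on Borel subsets of $\ker_\T L$, show it is a finite, nonzero, translation-invariant Borel measure on the compact abelian group $\ker_\T L$, and invoke uniqueness of Haar measure to conclude $\nu = c_L\, \mu_{L,\T}$ for some $c_L > 0$. That $\nu$ is well-defined is immediate from Lemma \ref{ker-charac}: the pieces $A^{(i)}$ are Borel and partition $A$, each $A^{(i)} -_\R x_i$ is a bounded Borel subset of the $(m-r)$-dimensional subspace $\ker_\R L$, and the sum is finite. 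Nonzeroness follows by taking $x_i = 0$ for some $i$, since $\ker_\R L \cap [0,1)^m$ contains a neighbourhood of the origin in $\ker_\R L$ and therefore has positive $(m-r)$-dimensional Lebesgue measure.

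The real content is translation invariance, $\nu(y +_\T A) = \nu(A)$ for $y \in \ker_\T L$, and the difficulty is that $+_\T$ involves a reduction modulo $\Z^m$ that shuffles the coset pieces $(x_i +_\R \ker_\R L) \cap [0,1)^m$. Fix $y \in \ker_\T L$ and $i \in [M]$, and write $A^{(i)} = x_i +_\R W_i$ with $W_i \subset \ker_\R L$. For each $w \in W_i$ there is a unique $k(w) \in \Z^m$ with $y +_\R x_i +_\R w -_\R k(w) \in [0,1)^m$; the function $k(\cdot)$ takes only finitely many values (boundedly many in $L$ and $y$), and each level set $W_i^{(k)} := \{w \in W_i : k(w) = k\}$ is Borel. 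On $W_i^{(k)}$ the translation $y +_\T$ restricts to the affine map $\psi_{i,k} \colon w \mapsto y +_\R x_i +_\R w -_\R k$. Since $Ly \in \Z^r$ (as $y \in \ker_\T L$) and $Lw = 0$, the image $L\psi_{i,k}(w)$ is independent of $w$ and lies in $L([0,1)^m)\cap \Z^r$, hence equals $Lx_{i'(i,k)}$ for a unique $i'(i,k)$. Setting $u_{i,k} := y +_\R x_i -_\R k -_\R x_{i'(i,k)} \in \ker_\R L$, we have $\psi_{i,k}(w) -_\R x_{i'(i,k)} = u_{i,k} +_\R w$, so by translation invariance of $(m-r)$-dimensional Lebesgue measure on $\ker_\R L$,
\[ \mu_{L,\R}\bigl(\psi_{i,k}(W_i^{(k)}) -_\R x_{i'(i,k)}\bigr) = \mu_{L,\R}(W_i^{(k)}). \]
Summing over $k$ (and then over $i'$, noting that the sets $\psi_{i,k}(W_i^{(k)})$ with $i'(i,k) = i'$ partition $(y +_\T A^{(i)})^{(i')}$) yields $\sum_{i'} \mu_{L,\R}((y +_\T A^{(i)})^{(i')} -_\R x_{i'}) = \mu_{L,\R}(W_i)$, and a final sum over $i$ gives $\nu(y +_\T A) = \nu(A)$.

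With translation invariance in hand, $\nu$ is a nonzero finite translation-invariant Borel measure on the compact abelian group $\ker_\T L$; by uniqueness of normalized Haar measure it must equal $c_L \, \mu_{L,\T}$ for some $c_L > 0$, depending only on $L$ since the construction of $\nu$ depends only on $L$ (via the choice of the $x_i$). The main obstacle is the translation-invariance step: the bookkeeping of how the mod-$\Z^m$ reduction permutes the coset slices, justified by the observation that $y +_\T$ locally translates $\ker_\R L$ by elements of itself, is the only substantive part of the argument.
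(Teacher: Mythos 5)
Your proposal is correct in its main structure and reaches the same conclusion as the paper, but via a genuinely different route. Both arguments ultimately appeal to uniqueness of Haar measure, but where you establish translation invariance of $\nu$ directly on $\ker_\T L$ by tracking how $y+_\T$ permutes the coset slices $(x_i+_\R\ker_\R L)\cap[0,1)^m$, the paper instead lifts to the closed subgroup $G=\{x\in\R^m:Lx\in\Z^r\}$ of $\R^m$, identifies its Haar measure as a slicewise sum of Lebesgue measures (immediately, since $\ker_\R L$ is an \emph{open} subgroup of $G$), and then descends to $\ker_\T L\cong G/\Z^m$ via the quotient integral formula. The paper's approach buys a cleaner invariance argument at the price of invoking the quotient integral formula; yours avoids that machinery but needs the mod-$\Z^m$ bookkeeping, which you carry out correctly: the key observation that $u_{i,k}=y+_\R x_i-_\R k-_\R x_{i'(i,k)}$ lies in $\ker_\R L$ (so that Lebesgue translation invariance applies) is exactly right, and the partitioning of $(y+_\T A^{(i)})^{(i')}$ by the level sets works as stated.

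There is one flaw to point out, though a small one: your justification for $\nu\neq 0$ is incorrect. Taking $x_i=0$, the set $\ker_\R L\cap[0,1)^m$ need \emph{not} contain a neighbourhood of the origin in $\ker_\R L$; for instance, with $m=2$, $r=1$, $L=(1\ \ 1)$, we have $\ker_\R L=\{(t,-t):t\in\R\}$ and $\ker_\R L\cap[0,1)^2=\{(0,0)\}$, which has zero $1$-dimensional Lebesgue measure. Nonzeroness of $\nu$ is still true, but needs a different reason: since $\ker_\T L$ is the disjoint union of the finitely many Borel slices $(x_i+_\R\ker_\R L)\cap[0,1)^m$ and is a (compact, Baire) Lie group of dimension $m-r$, at least one slice has nonempty interior in $\ker_\T L$, hence positive $\mu_{L,\R}$-measure after translating by $-x_i$. (Alternatively one can note, as the paper implicitly does by working upstairs in $G$, that $\ker_\R L$ is an open subgroup of $G$ and so a fundamental domain for $\Z^m$ inside it already has positive measure.) With this repair, the argument is complete.
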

\begin{proof}
Let $G$ denote the group $\{x\in \R^m: Lx\in \Z^r\}$. This is a closed subgroup of $\R^m$, and $H :=\Z^m\leq G$. Clearly we may identify $\ker_\T L$ with $G/H$. Thus, in the notation from \eqref{char}, we have $G=(\sqcup_{i\in [M]} x_i+\ker_\R L)+\Z^m$, so we may write
$G = \bigsqcup_{z \in Z} (z + \ker_\R L)$, for some collection $Z \subset \bigcup_i\{x_i\} + \Z^m$ containing the $x_i$. It is then easy to verify that Haar measure on $G$ must be a multiple of
\begin{equation}\label{eqn:G_meas}
\mu_G(A) := \sum_{z \in Z} \mu_{L,\R}\left( A^{(z)} - z \right),
\end{equation}
where $A^{(z)} = A \cap (z + \ker_\R L)$, by considering its restriction to $\ker_\R L$. Endowing $H$ with counting measure, by the quotient integral formula \cite[Thm 1.5.2]{principlesHA} there is an invariant Radon measure $\mu_{G/H}\neq 0$ on $G/H$ such that
\begin{equation}\label{eqn:QIF}
\int_G f \ud \mu_G = \int_{G/H} \sum_{n \in \Z^m} f(x+_\R n) \ud \mu_{G/H}(x)
\end{equation}
for any $f \in L_1(G)$. By the uniqueness of Haar measure we have $\mu_{L,\T} = c_L \mu_{G/H}$ for some constant $c_L > 0$. Now, given a Borel subset $A$ of $\ker_\T L$, the function $f = 1_A$ on $G$ is integrable, and the function $\sum_{n \in \Z^m} f(x+n)$ on $G/H$ is simply $1_A$, whence by \eqref{eqn:QIF} we have $\mu_{G/H}(A)=\int_G 1_A(x) \ud\mu_G(x)$ and by \eqref{eqn:G_meas} this is $\sum_{i\in [M]}\mu_{L,\R} (A^{(i)}- x_i)$.
\end{proof}
Lemma \ref{estimate} follows immediately from the following result.
\begin{lemma}\label{reduct}
There exists a finite set $\Lambda^*$ of positive quantities, depending only on $L$, such that for all large positive integers $p$ we have $\{\lambda(j):j\in J(p,L)\}\subset \Lambda^*$.
\end{lemma}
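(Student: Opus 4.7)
The plan is to exploit the geometric formula in Lemma \ref{Haar-Lebesgue} to rewrite $\lambda(j)$ in terms of $(m-r)$-dimensional Lebesgue measures on $V := \ker_\R L$, and then rescale by $p$ to absorb the factor $p^{m-r}$, thereby producing a quantity whose admissible values are governed entirely by $L$. Writing $A = (j/p + [0,1/p)^m)\cap \ker_\T L$, Lemma \ref{Haar-Lebesgue} gives
\[
\mu_{L,\T}(A) = c_L \sum_{i \in [M]} \mu_{L,\R}\bigl( (j/p + [0,1/p)^m - x_i) \cap V \bigr).
\]
Since $V$ is a linear subspace and $(m-r)$-dimensional Lebesgue measure on $V$ scales by a factor of $p^{m-r}$ under the dilation $y \mapsto p y$, multiplying each set inside the sum by $p$ yields
\[
\lambda(j) = c_L \sum_{i \in [M]} \mu_{L,\R}\bigl( (j - p x_i + [0,1)^m) \cap V \bigr).
\]

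Next I would analyse the function $\phi(z) := \mu_{L,\R}\bigl((z + [0,1)^m) \cap V\bigr)$ on $\R^m$. For any $v \in V$, one has $(z + v + [0,1)^m) \cap V = v + ((z + [0,1)^m) \cap V)$, so $\phi$ is invariant under translation by $V$ and therefore descends, via the linear isomorphism $L : \R^m/V \to \R^r$, to a function $\widetilde{\phi}$ on $\R^r$. Moreover $\phi(z) = 0$ unless $z + [0,1)^m$ meets $V$, i.e., unless $L(z) \in -L([0,1)^m)$, so $\widetilde{\phi}$ is supported on a bounded subset of $\R^r$. Since $j \in \Z^m$ and $L x_i \in \Z^r$ (the latter by the choice of $x_i$ in Lemma \ref{ker-charac}), the argument $L(j - p x_i) = Lj - p\, L x_i$ lies in $\Z^r$, and in fact lies in the finite set $\mathcal{N}_L \subset \Z^r$ consisting of those integer points in the closure of $-L([0,1)^m)$ whenever the corresponding summand is nonzero.

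Combining these two observations, $\lambda(j)$ is a sum of at most $M$ terms, each drawn from the finite collection $\{\widetilde{\phi}(n) : n \in \mathcal{N}_L\}$, so the set of possible values of $\lambda(j)$ as $p$ and $j$ vary is finite and depends only on $L$. Taking $\Lambda^*$ to consist of those sums that are strictly positive gives the required finite set, and the membership $j/p \in J(L,p)$ is by definition equivalent to $\lambda(j) > 0$, so $\lambda(j) \in \Lambda^*$. The main obstacle I anticipate is keeping the coset-of-$V$ bookkeeping clean: one must combine the $V$-invariance of $\phi$ with the integrality of $Lj$ and $Lx_i$ to force the relevant argument of $\widetilde{\phi}$ into a fixed finite lattice set. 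Once that geometric fact is pinned down, no compactness or analytic input is needed beyond the standard scaling behaviour of $(m-r)$-dimensional Lebesgue measure.
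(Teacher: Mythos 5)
Your proof is correct and rests on the same three pillars as the paper's: the geometric formula of Lemma \ref{Haar-Lebesgue}, scaling of $(m-r)$-dimensional Lebesgue measure on $V=\ker_\R L$ by $p^{m-r}$, and the $V$-invariance of $z\mapsto\mu_{L,\R}((z+[0,1)^m)\cap V)$ to reduce to finitely many values of $L(z)\in\Z^r$. The one place you deviate is the bookkeeping: the paper first notes that for $p$ large the cube $(j/p+[0,1/p)^m)\cap\ker_\T L$ sits inside a single coset $x_i+_\R V$, so only one term survives, and then controls $j-p\,x_i$ by passing through a finite set $U=\{-p\,x_i\bmod 1\}$, which requires taking the $x_i$ rational. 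You instead keep the full sum over $i\in[M]$ and observe directly that $L(j-p\,x_i)=Lj-p\,Lx_i\in\Z^r$ because $Lx_i\in\Z^r$ by construction; the argument then lands in the finite set $\Z^r\cap\overline{-L([0,1)^m)}$ with no rationality hypothesis and no need to isolate a single coset. This is a mild streamlining: your version dispenses with the rationality of the $x_i$ and the set $U$, at the cost of $\Lambda^*$ being a set of bounded sums rather than single values, which is of course equally finite.
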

\begin{proof}
First we show that there is a finite set $U\subset \ker_\T L$, depending only on $L$, such that for any large $p$ and $j/p\in J$ there exist $v\in \Z^m$ and $u\in U$ such that
\begin{equation}\label{uv}
\lambda(j)=c_L\; \mu_{L,\R}\big((u+_\R v+_\R [0,1)^m)\cap \ker_\R L\big)>0,
\end{equation}
where $c_L$ is the constant from Lemma \ref{Haar-Lebesgue}. For $p$ large enough depending only on $L$, by \eqref{char} the set $(j/p+[0,1/p)^m )\cap\ker_\T L$ lies entirely in $x_i+_\R\ker_\R L$ for some $i\in [M]$, so
\begin{eqnarray*}
\lambda(j)&=&p^{m-r}\mu_L\big(\,(j/p+[0,1/p)^m)\cap\ker_\T L\big)\\
&=&c_L\;p^{m-r} \mu_{L,\R} \big(\,(j/p-_\R x_i+_\R [0,1/p)^m)\cap\ker_\R L\big)\\
&=&c_L\; \mu_{L,\R} \big(\,(j-_\R p\, x_i+_\R[0,1)^m)\cap\ker_\R L\big),
\end{eqnarray*}
where $j\in \Z^m$. Now let $U=\bigcup_{i\in [M]}\{-p\, x_i \mod 1:p\in \N\}\subset \ker_\T L$. This is a finite subset of $\ker_\T L$ if we take the $x_i$ to have rational coordinates (as we do). For any $j/p\in J$, we then have $j-_\R p\, x_i=u+_\R v$ for some $u\in U$ and $v\in \Z^m$, whence \eqref{uv} follows.

Now, by translation invariance of $\mu_{L,\R}$ by elements of $\ker_\R L$, the measure in \eqref{uv} depends only on $L(u+v)$. But if this measure is positive, then $L(u+v)$ is contained in the finite set $L\left(\bigcup_{w\in U} w+\Z^m \right) \cap -L\left([0,1)^m\right)$. Hence there are only finitely many possible values for the left-hand side of \eqref{uv}.
\end{proof}

\section{Proofs of the main results}\label{section:main_proofs}

Recall that whenever $A$ is a $p$-measurable subset of $\T$ we denote by $A'$ the corresponding subset of $\Z_p$ defined by $1_{A'}(x)=1_A(x/p)$.
\begin{proof}[Proof of Theorem \ref{T-system-removal}]
Given the matrix $L$, fix $\epsilon>0$, let $\lambda^*>0$ be the lower bound given by Lemma \ref{estimate}, and let $K_L>0$ be as defined in
Lemma \ref{J-cover}. Let $\delta'>0$ be such that Theorem \ref{KSVGC} holds with initial parameter $\epsilon/2K_L$, and let $\delta=\min(\delta'\lambda^*,\epsilon)/2$. Now let $A_i\subset \T$, $i\in[m]$, be any Borel sets satisfying $S_L(A_1,\ldots,A_m)\leq \delta$. Applying Lemma \ref{approx}, we can assume that the given sets $A_i$ are $p$-measurable for some large prime $p$, up to an error of measure $\delta/m\leq\epsilon/2$ for each set, and such that $S_L(A_1,\ldots,A_m)\leq 2\delta\leq \delta'\lambda^*$. It follows from \eqref{sol-rel-formula} and the lower bound $\lambda_k\geq\lambda^*$ that for some $K\leq K_L$ and each $k\in [K]$, we have $S_{L,\Zmod{p}}(A_1'-j_k(1),\ldots,A_m'-j_k(m))\leq \delta'$, and so Theorem \ref{KSVGC} gives us subsets $E_{k,1},\ldots, E_{k,m}$ of $\Z_p$ of cardinality at most $\epsilon p/2K_L$ such that
\begin{equation}\label{nosol}
\big((A_1'\setminus E_{k,1})-j_k(1)\big)\times\cdots\times \big((A_m'\setminus E_{k,m})-j_k(m)\big) \cap \ker_{\Z_p} L=\emptyset.
\end{equation}
Now for each $i\in [m]$, define the $p$-measurable set $E_i=\bigcup_{k\in [K]}(E_{k,i}/p+[0,1/p))$, and note that $\mu_\T(E_i)\leq \epsilon/2$. Finally, for each $i\in [m]$ let $\Delta_i$ be the null set $\Z_p/p$ in $\T$. We now claim that
\[\prod_{i\in [m]} A_i\setminus (E_i\cup \Delta_i)\;\cap\; \ker_\T L=\emptyset.\]
Suppose for a contradiction that this set is non-empty, containing some point $x$. Then by the $p$-measurability of the sets $A_i\setminus E_i$ and the definition of $\Delta_i$, letting $j$ denote the point $(\lfloor p\,x_1\rfloor,\ldots,\lfloor p\,x_m\rfloor)\in \Z_p^m$, we have
\[\prod_{i\in [m]} A_i\setminus (E_i\cup \Delta_i)\supset j/p+(0,1/p)^m \ni x.\]
But then $\big(j/p+(0,1/p)^m\big)\cap\ker_\T L$ is a non-empty open subset of $\ker_\T L$, so this set must have positive $\mu_L$-measure, and so $j/p\in J$. Then, by the covering of $J$ in Lemma \ref{J-cover}, there exists $k\in [K]$ such that $j\in j_k+\ker_{\Z_p} L$, and so $j-j_k$ belongs to $\prod_i\big(\,(A_i'\setminus E_i')-j_k(i)\,\big) \cap\ker_{\Z_p} L$, contradicting \eqref{nosol}.
\end{proof}

We can now quickly deduce Theorem \ref{T-Szem}. We say $A\subset \T$ is $L$\emph{-free} if $A^m\cap \ker_\T L=\emptyset$.
\begin{proof}[Proof of Theorem \ref{T-Szem}]
Let $c$ be a positive value of $\delta$ such that Theorem \ref{T-system-removal} holds with initial parameter $\epsilon=\alpha/2m$. Suppose $S_L(A)\leq c$. Then by Theorem \ref{T-system-removal} there exists a measurable set $E\subset A$ such that $A\setminus E$ is $L$-free and
$\mu_\T(E)\leq\mu_\T(E_1)+\cdots+\mu_\T(E_m)\leq \alpha/2$. Since for any $a\in A\setminus E$ the constant element $(a,\ldots,a)\in \T^m$ is in $\ker_\T L$, we must have $A\setminus E=\emptyset$, and therefore $\mu_\T(A)=\mu_\T(E) < \alpha$.
\end{proof}
While Theorem \ref{T-Szem} follows very easily from Theorem \ref{T-system-removal}, one can in fact simplify the overall argument somewhat if one is only interested in the former theorem---see the first remark in the next section.
\begin{proof}[Proof of Proposition \ref{0-removal}]
For each $i\in [m]$ let $D_i$ denote the set of Lebesgue density points of $A_i$. Suppose for a contradiction that there exists some point $x$ in $D_1\times\cdots\times D_m\cap \ker_\T L$, and fix $\epsilon>0$. By the Lebesgue density theorem, there exists $\delta>0$ such that, letting $Q$ denote the cube centered on $x$ and of side-length $\delta$, we have
$\mu_\T(D_i\cap \pi_i Q)\geq (1-\epsilon)\delta$ for all $i$ (where $\pi_i$ denotes projection to the $i$th component on $\T^m$). Now, by Lemma \ref{ker-charac}, and the characterization of $\mu_{L,\T}$, setting $C_i:=D_i\cap \pi_i Q$ for each $i$, there exists a constant $c_L>0$ such that $\mu_L(C_1\times\cdots\times C_m \cap \ker_\T L) \geq c_L \delta^{m-r}\mu_{L,\R}(B_1\times\cdots\times B_m\cap \ker_\R L)$, 
where $B_i\subset [-1/2,1/2)$ is the dilation by $\delta^{-1}$ of the set $B_i'-x_i$, when the latter is viewed as a subset of $I := [-1/2,1/2]\subset \R$. We claim that the large density of each $B_i$ inside $I$ implies $\mu_{L,\R}(B_1\times\cdots\times B_m\cap \ker_\R L)>0$, which gives a contradiction. Indeed, by multilinearity and Lemma \ref{basic} we have that $\abs{ \mu_{L,\R}(I^m\cap \ker_\R L)-\mu_{L,\R}(B_1\times\cdots\times B_m\cap \ker_\R L) }$ is at most
\begin{gather*}
\sum_{i\in [m]} \abs{ \int_{\ker_\R L} 1_{B_1}(x_1)\cdots 1_{B_{i-1}}(x_{i-1})\;(1_I - 1_{B_i})(x_i)\;1_I(x_{i+1})\cdots 1_I(x_m) \ud\mu_{L,\R}(x) } \\
\leq c \sum_{i \in [m]} \norm{ 1_I - 1_{B_i} }_{L_1(\R)} \leq cm \epsilon.
\end{gather*}
Setting $\epsilon = \mu_{L,\R}(I^m \cap \ker_\R L)/2cm$ yields the claim. Note that the measure here is strictly positive since $I^m \cap \ker_\R L$ contains a non-empty open set. (In fact $\mu_{L,\R}(I^m \cap \ker_\R L) \geq 1$ by Vaaler's theorem \cite{Va}.)
\end{proof}
\section{Remarks}\label{section:remarks}
The precision of Lemma \ref{sol-measure-relation} is not required for a proof of Theorem \ref{T-Szem} per se; one can do with a simpler inequality of the form $S_{L,\T}(A_1,\ldots,A_m) \gg_L S_{L,\Zmod{p}}(A_1',\ldots,A_m')$. (If $L$ is invariant one can also apply Vaaler's theorem to obtain the more precise inequality $S_{L,\T}(A_1,\ldots,A_m) \geq S_{L,\Zmod{p}}(A_1',\ldots,A_m')$ for $p$-measurable sets $A_i$.) 
On the other hand, the non-trivial shifts of $A_1' \times \cdots \times A_m'$ that contribute to $S_L(A_1,\ldots,A_m)$ in Lemma \ref{sol-measure-relation} need to be taken into account when removing solutions from $A_1 \times \cdots \times A_m$ as in Theorem \ref{T-system-removal}.

As mentioned in the introduction, Theorem \ref{T-system-removal} can be used when studying $\T$ as a limit object or model for certain finite additive-combinatorial questions. A well-known question of this kind asks for the maximal density $d_L(\Z_p)$ of a subset of $\Z_p$ not containing solutions to a given system $Lx=0$. In \cite{candela-sisask}, the special case of Theorem \ref{T-system-removal} for a single equation was used to show that if $L$ is a linear form with integer coefficients in at least 3 variables then $d_L(\Z_p)$ converges to the natural analogue $d_L(\T):=\sup\{\mu_\T(A):A\subset \T\textrm{ is }L\textrm{-free}\}$ as $p\to\infty$ through the primes. Theorem \ref{T-system-removal} enables us to extend this convergence result to so-called systems of \emph{complexity} 1. A notion of complexity for systems of linear forms on finite abelian groups was introduced in the paper \cite{GWcomp}, to which we refer the reader for more background on this topic. We use the following variant of this notion, specific to groups $\Z_p$ and $\T$.
\begin{definition}
Let $L$ be an $r\times m$ integer matrix. We say the system of equations $Lx=0$ \textup{(}alternatively, the matrix $L$\textup{)} has \emph{complexity} $k$ if $k$ is the smallest integer such that, for any $\epsilon>0$, there exists $\delta>0$ with the following property: let $G=\T$ or $\Z_p$ for any large prime $p>p_0(L)$; then for any $f,g: G\to \C$ with $\|f\|_{L_\infty(G)},\|g\|_{L_\infty(G)}$ both at most 1 and $\|f-g\|_{U^{k+1}(G)}\leq \delta$, we have $|S_{L,G}(f)-S_{L,G}(g)|\leq \epsilon$.
\end{definition}
Here the notation $\|f\|_{U^k(G)}$ refers to the $k$th Gowers uniformity norm, which is defined on $L_\infty(G)$ for any compact abelian group $G$ \cite{ET}. Using Theorem \ref{T-system-removal}, the main convergence result from \cite{candela-sisask} can be extended as follows.
\begin{theorem}\label{CSgen}
Let $\mathcal{F}$ be a finite family of full-rank integer-matrices of complexity 1, and let $d_\mathcal{F}(\Z_p)$ denote the maximal density of an
$\mathcal{F}$-free subset of $\Z_p$. Then $d_\mathcal{F}(\Z_p)\to d_\mathcal{F}(\T)$ as $p\to\infty$ over primes.
\end{theorem}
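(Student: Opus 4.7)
I would prove the two inequalities $\liminf_{p\to\infty} d_\mathcal{F}(\Z_p) \geq d_\mathcal{F}(\T)$ and $\limsup_{p\to\infty} d_\mathcal{F}(\Z_p) \leq d_\mathcal{F}(\T)$ separately, following the strategy used in \cite{candela-sisask} for a single equation and leveraging the complexity $1$ hypothesis to handle full systems. For the lower bound, fix $\eta > 0$ and pick an $\mathcal{F}$-free set $A \subset \T$ with $\mu_\T(A) \geq d_\mathcal{F}(\T) - \eta$. For each large prime $p$, Lemma \ref{approx} produces a $p$-measurable $A_p$ with $\mu_\T(A \Delta A_p) \leq \eta$ and $S_{L,\T}(A_p) \leq 2\eta$ for every $L \in \mathcal{F}$. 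Lemma \ref{sol-measure-relation} writes this as a non-negative combination of shifted discrete solution measures; the term corresponding to $j_k = 0$ has weight at least $\lambda^* > 0$ by Lemma \ref{estimate}, which gives $S_{L, \Z_p}(A_p') \leq 2\eta/\lambda^*$. Applying Theorem \ref{KSVGC} for each $L \in \mathcal{F}$ then removes a subset of $A_p'$ of density $O(\eta/|\mathcal{F}|)$, leaving an $\mathcal{F}$-free subset of $\Z_p$ with density at least $d_\mathcal{F}(\T) - O_\mathcal{F}(\eta)$, and $\eta \to 0$ gives the inequality.

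For the upper bound, fix $\epsilon > 0$ and for each $L \in \mathcal{F}$ let $\delta_L = \delta(L, \epsilon/(m|\mathcal{F}|))$ be the parameter furnished by Theorem \ref{T-system-removal}. Let $A_p' \subset \Z_p$ be an $\mathcal{F}$-free set of maximum density $\alpha_p = d_\mathcal{F}(\Z_p)$ and let $A_p \subset \T$ be its $p$-measurable lift (so $\mu_\T(A_p) = \alpha_p$). The plan is to show that $S_{L,\T}(A_p) \leq \delta_L$ for each $L \in \mathcal{F}$ and all large $p$: Theorem \ref{T-system-removal} then produces, for each $L$, sets $E_{L,i}$ of measure at most $\epsilon/(m|\mathcal{F}|)$ such that $\prod_i (A_p \setminus E_{L,i}) \cap \ker_\T L = \emptyset$, and deleting $\bigcup_{L,i} E_{L,i}$ (total measure $\leq \epsilon$) leaves an $\mathcal{F}$-free subset of $\T$ of measure at least $\alpha_p - \epsilon$, so $d_\mathcal{F}(\T) \geq \alpha_p - \epsilon$. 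By Lemma \ref{sol-measure-relation} we decompose
\[
S_{L,\T}(A_p) = \sum_{k \in [K]} \lambda_k\, S_{L, \Z_p}\bigl(A_p' - j_k(1), \ldots, A_p' - j_k(m)\bigr), \qquad K \leq K_L.
\]
For $k$ with $L j_k \equiv 0 \pmod p$, the shift lies in $\ker_{\Z_p} L$ and the summand reduces to $S_{L, \Z_p}(A_p') = 0$. For the remaining $k$, the vector $n_k = L j_k$ is a non-zero element of a fixed finite subset of $\Z^r$ (by the proof of Lemma \ref{J-cover}); substituting $y = x + j_k$ and expanding via Fourier inversion on $\Z_p^r$ gives
\[
S_{L, \Z_p}(A_p' - j_k) \;=\; \alpha_p^m + \sum_{\xi \in \Z_p^r \setminus\{0\}} e^{-2\pi i \xi \cdot n_k/p} T_\xi, \quad T_\xi := \prod_{i=1}^m \widehat{1_{A_p'}}\bigl(-(L^\transpose \xi)_i\bigr),
\]
and combining with the identity $\alpha_p^m + \sum_{\xi \neq 0} T_\xi = S_{L,\Z_p}(A_p') = 0$ yields $S_{L, \Z_p}(A_p' - j_k) = \sum_{\xi \neq 0}(e^{-2\pi i \xi \cdot n_k / p} - 1) T_\xi$. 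The complexity $1$ hypothesis is then used to show this oscillatory Fourier sum is $o_{p \to \infty}(1)$ uniformly over $\mathcal{F}$-free $A_p'$: the total Fourier mass $\sum_\xi |T_\xi|$ is controlled in terms of $U^2(\Z_p)$-data on $A_p'$, the phase factor tends to $1$ pointwise on each fixed frequency as $p \to \infty$ (since $n_k$ is bounded), and complexity $1$ ensures the $U^2$-data propagates cleanly between the discrete and continuous sides. Summing over the $K \leq K_L$ terms produces the desired bound $S_{L, \T}(A_p) \leq \delta_L$ for $p$ large enough.

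The main obstacle is executing the uniform bound on the oscillatory Fourier sum in the upper-bound direction. The $U^2$ norm of an extremal $\mathcal{F}$-free set need not be small, so the naive strategy of replacing $1_{A_p'}$ by its mean $\alpha_p$ and absorbing the error in $U^2$ fails; instead one uses the identity $\sum_{\xi \neq 0} T_\xi = -\alpha_p^m$ forced by $\mathcal{F}$-freeness to cancel the leading term of the shifted sum, so that only the oscillatory deviation remains. The complexity $1$ hypothesis then provides the quantitative control needed to show this deviation vanishes as $p \to \infty$; for higher-complexity systems one would require finer Gowers-norm information that is not readily transferable between $\Z_p$ and $\T$, which is precisely why the theorem is stated at the complexity $1$ level.
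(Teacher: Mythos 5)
Your overall framework — proving the two inequalities $\liminf_p d_\mathcal{F}(\Z_p) \geq d_\mathcal{F}(\T)$ and $\limsup_p d_\mathcal{F}(\Z_p) \leq d_\mathcal{F}(\T)$ separately, with the lower bound coming from Lemma \ref{approx}, Lemma \ref{sol-measure-relation} and the discrete removal lemma (Theorem \ref{KSVGC}), and the upper bound coming from the continuous removal lemma (Theorem \ref{T-system-removal}) applied to the lift of an extremal $\Z_p$-set — is the right shape, and is presumably the one meant by the paper's remark that the argument of \cite{candela-sisask} generalizes. Your lower-bound paragraph has a minor quantifier slip (Theorem \ref{KSVGC} does not give a removal of density $O(\eta)$ from a solution count $O(\eta)$; the removal parameter $\epsilon$ must be fixed first and $\eta$ chosen small in terms of the resulting $\delta$), but that is easily repaired and is not the concern.

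The genuine gap is in the upper bound. You want to show that if $A_p'\subset\Z_p$ is $\mathcal{F}$-free then the lift $A_p$ satisfies $S_{L,\T}(A_p)=o_{p\to\infty}(1)$ for each $L\in\mathcal{F}$, and via Lemma \ref{sol-measure-relation} you reduce this to showing $S_{L,\Z_p}(A_p'-j_k(1),\dots,A_p'-j_k(m))=o(1)$ for the non-trivial shifts $j_k$ (those with $Lj_k\not\equiv 0\bmod p$ but $Lj_k=O_L(1)$ as an integer vector). This statement is \emph{false} in the generality in which you assert it, so no Fourier manipulation can establish it. Take $L=(1,1,-1)$ (sum-free) and let $A_p'=q\cdot\{a:p/3<a<2p/3\}$ for a dilation factor $q\approx\sqrt p$; this is still sum-free in $\Z_p$ of density $\approx 1/3$ (dilation preserves $L$-freeness and density), but its lift to $\T$ is a union of $\approx\sqrt p$ well-spread short intervals of total measure $\approx 1/3$, and for such a ``pseudorandom'' set one has $S_{L,\T}(A_p)\approx(1/3)^3$, bounded away from $0$. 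Equivalently, the shifted discrete solution measure $S_{L,\Z_p}(A_p'-j_k)$ stays $\gg 1$. Your proposed mechanism — ``the phase factor $e^{-2\pi i\xi\cdot n_k/p}-1$ tends to $1$ pointwise on each fixed frequency'' and ``complexity $1$ ensures the $U^2$-data propagates cleanly'' — cannot close this gap: the sum runs over all $\xi\in\Z_p^r$, not over a fixed finite set of frequencies, and for $|\xi|$ comparable to $p$ the phase factor has modulus close to $2$; moreover the $U^2$ norm is translation-invariant, so complexity~$1$ gives no relation between $S_{L,\Z_p}(A_p')$ and $S_{L,\Z_p}(A_p'-j_k)$ for a non-trivial shift $j_k$. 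The after-cancellation sum $\sum_{\xi\neq0}(e^{-2\pi i\xi\cdot n_k/p}-1)T_\xi$ is merely $O(1)$ by Hölder and Parseval, not $o(1)$, and the counterexample above shows it really can be of order $\alpha_p^m$.

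The correct upper-bound argument therefore must do something more than lift a single extremal $A_p'$; one has to exploit additional freedom (such as selecting a good dilate $q\cdot A_p'$, or using a structural decomposition of $A_p'$, or otherwise engineering the lift so that the shifted solution measures are genuinely small) before invoking Theorem \ref{T-system-removal}. This is where the complexity~$1$ hypothesis should really enter, and it is the step that your proposal does not supply.
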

Here $d_\mathcal{F}(\T):=\sup\{\mu_\T(A):A\subset\T\textrm{ is }\mathcal{F}\textrm{-free}\}$, where we say a measurable set $A\subset \T$ is $\mathcal{F}$-free if $A$ is $L$-free for every $L\in\mathcal{F}$. Generalizing the argument in \cite{candela-sisask} to obtain Theorem \ref{CSgen} is not hard; we omit the details in this paper.

Let us close with remarks regarding further generalizations of removal lemmas. Recently, Kr\'al, Serra and Vena extended Theorem \ref{KSVGC} to all finite abelian groups \cite{KSVGR}, and upon inspection Green's proof \cite{GAR} for single equations can be seen to hold over arbitrary compact abelian groups. Can Theorem \ref{T-system-removal} be generalized to all compact abelian groups? The desired generalization should hold with a function $\delta(L,\epsilon)$ independent of the group, so in particular $\delta$ should not depend on the group's topological dimension. The argument in this paper, when applied with $\T^n$ instead of $\T$, gives a parameter $\delta$ which decays to 0 as $n$ grows, so additional ideas are required.\\

\textbf{Acknowledgements.} The authors would like to thank Tim Austin for helpful conversations. Parts of this work were carried out while the authors attended the Discrete Analysis programme at the Isaac Newton Institute, whose support is gratefully acknowledged.

\end{document}